\newtheorem{theorem}{Theorem}
\newtheorem{lemma}{Lemma}
\def\XXint#1#2#3{{\setbox0=\hbox{$#1{#2#3}{\int}$ }
\vcenter{\hbox{$#2#3$ }}\kern-.6\wd0}}
\begin{document}
\title[A QUANTITATIVE BALIAN-LOW THEOREM FOR HIGHER DIMENSIONS]
      {A quantitative Balian-Low theorem for higher dimensions}
      
\author{Faruk Temur}
\address{Department of Mathematics\\
        Izmir Institute of Technology  }
\email{faruktemur@iyte.edu.tr}
\keywords{Uncertainty principle, Balian-Low theorem}
\subjclass[2010]{Primary: 42C15; Secondary: 42A38}
\date{January 15, 2016}

\begin{abstract}
We extend the quantitative Balian-Low theorem of Nitzan and Olsen to  higher dimensions. 
\end{abstract} 

\maketitle

\section{introduction}
Uncertainty principles are statements that limit  simultaneous concentration of   functions and their Fourier transforms. In the last two decades significant  attention has been paid to quantifying the maximum concentration that can be achieved. In this vein Nazarov proved in his seminal work \cite{na} that, for a  function $g\in L^2(\mathbb{R})$, and two  sets of finite measure $\mathcal{R},\mathcal{L}$  we have
\begin{equation*}
\int_{\mathbb{R}\setminus \mathcal{R}}|g(x)|^2dx + \int_{\mathbb{R}\setminus \mathcal{L}}|\widehat{g}(\xi)|^2d\xi \geq e^{-C|R||L|}\|g\|_{L^2(\mathbb{R})}^2
\end{equation*}    
 for an absolute constant $C>0$. This result quantifies the Heisenberg uncertainty priciple.
 Similarly it is possible to quantify the Balian-Low theorem \cite{bal,da,lo} which states that  if the Gabor system 
 \begin{equation*}
 G(g):=\{e^{2\pi i nx}g(x-m)\}_{(m,n)\in \mathbb{Z}^2}
 \end{equation*}
 generated by the function $g$ is a Riesz basis, we must have
 \begin{equation*}
 \int_{\mathbb{R}}|g(x)|^2x^2dx=\infty  \ \ \text{or}  \ \ \int_{\mathbb{R}}|\widehat{g}(\xi)|^2\xi^2d\xi=\infty.
 \end{equation*}
Nitzan and Olsen, \cite{no}, quantified this theorem by proving for $g$ as  above and $R,L$ are two real numbers with $R,L\geq 1$ 
 \begin{equation*}
 \int_{|x|\geq R}|g(x)|^2dx + \int_{|\xi| \geq L}|\widehat{g}(\xi)|^2d\xi \geq \frac{C}{RL}.
 \end{equation*}
where $C$ only depends on the Riesz basis bounds for the function $g$.

As is seen all these results are one-dimensional in nature. Although analogous results higher dimensions are conjectured, due to possibly much more complicated geometry of an arbitrary set in higher dimensions, progress has been more limited. One result in this direction is that of Jaming \cite{pj}  stating that for $g\in L^2(\mathbb{R}^d)$, and two  sets of finite measure $\mathcal{R},\mathcal{L}$
we have
\begin{equation*}
\int_{\mathbb{R}^d\setminus \mathcal{R}}|g(x)|^2dx + \int_{\mathbb{R}^d\setminus \mathcal{L}}|\widehat{g}(\xi)|^2d\xi \geq e^{-CD}\|g\|_{L^2(\mathbb{R}^d)}^2
\end{equation*} 
where $D=\min \{|\mathcal{R}||\mathcal{L}|,\alpha(\mathcal{R})|\mathcal{L}|^{\frac{1}{d}},\alpha(\mathcal{R})|\mathcal{L}|^{\frac{1}{d}}\}$ with $\alpha$ denoting the mean width of a set $S$ given by
\begin{equation*}
\alpha(S):= \int_{SO(d)} P_{\rho}(S)dv_d(\rho)
\end{equation*}
with $dv_d$ being the normalized Haar measure on the group of rotations  $SO(d)$, and $P_{\rho}(S)$ being the measure of the projection of $S$ on the line obtained by applying the rotation $\rho$ to the line spanned by the vector $(1,0,\ldots,0)$.  It is conjectured that $D$ can be replaced by $|\mathcal{R}|^{\frac{1}{d}}|\mathcal{L}|^{\frac{1}{d}}$. Thus, this result is essentially optimal if one of the sets $\mathcal{R},\mathcal{L}$ is very round, but it is far from optimal even when both sets are simple rectangles.
Our aim in this work is to   extend  to higher dimensions the work of Nitzan and Olsen \cite{no}, to investigate localization on rectangles.
\begin{theorem}
Let $g\in L^2(\mathbb{R}^d)$ be such that the Gabor system generated by $g$ 
 \begin{equation*}
  G(g):=\{e^{2\pi i nx}g(x-m)\}_{(m,n)\in \mathbb{Z}^{2d}}
  \end{equation*}
is a Riesz basis. Let $R_i,  L_i\geq 1$ be real numbers for each $  1\leq i \leq d$.  Let $\mathcal{R},\mathcal{L}$ be the d-dimensional rectangles $\mathcal{R}:=(-R_1,R_1)\times  \ldots  \times(-R_n,R_n)$, and $\mathcal{L}:=(-L_1,L_1)\times \ldots  \times(-L_n,L_n)$. We then have for a constant $C$ depending only on the Riesz basis bounds of $g$
\begin{equation*}
 \int_{\mathbb{R}^n \setminus \mathcal{R}}|g(x)|^2dx + \int_{\mathbb{R}^n \setminus  \mathcal{L}}|\widehat{g}(\xi)|^2d\xi \geq \frac{C}{R_i L_i}
 \end{equation*}
for any $ 1\leq i \leq d$. The theorem is sharp in the sense that the term ${C}/{R_i L_i}$ cannot be replaced by ${C}\log R_iL_i/{R_i L_i}$
\end{theorem}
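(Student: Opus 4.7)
The plan is to reduce the $d$-dimensional statement to the one-dimensional result of Nitzan and Olsen through a slicing in the transverse $d-1$ coordinates, realized by a partial Zak transform. Without loss of generality I fix $i=1$. For parameters $(x',\xi')\in[0,1]^{2(d-1)}$ I introduce the one-variable function
$$f_{x',\xi'}(x_1) \;:=\; \sum_{k'\in\mathbb{Z}^{d-1}} g(x_1,\,x'+k')\,e^{-2\pi i k'\cdot\xi'},$$
that is, the partial Zak transform $Z'g$ of $g$ in the last $d-1$ coordinates evaluated at $(x',\xi')$.

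The first step is to show that for almost every $(x',\xi')$ the one-dimensional Gabor system $G(f_{x',\xi'})$ is a Riesz basis of $L^2(\mathbb{R})$ with bounds depending only on those of $G(g)$. This will follow from the Zak-transform characterization of Gabor Riesz bases together with the factorization $Z_1\circ Z' = Z$, which identifies the 1D Zak transform of $f_{x',\xi'}$ at $(x_1,\xi_1)$ with the value of the full $d$-dimensional Zak transform $Zg$ at $(x_1,x',\xi_1,\xi')$. Since by hypothesis $|Zg|^2$ is bounded above and below on $[0,1]^{2d}$, Fubini yields uniform Riesz bounds for $f_{x',\xi'}$ on a full-measure set of parameters.

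Next, applying the 1D Nitzan-Olsen theorem to each such $f_{x',\xi'}$ gives a lower bound $C/(R_1L_1)$ for the sum of the $x_1$-tail of $f_{x',\xi'}$ outside $(-R_1,R_1)$ and the $\eta_1$-tail of $\widehat{f_{x',\xi'}}$ outside $(-L_1,L_1)$. Integrating this inequality over $(x',\xi')\in[0,1]^{2(d-1)}$, the Parseval identity for the partial Zak transform shows that the integrated spatial tail equals $\int_{|x_1|\geq R_1}\int_{\mathbb{R}^{d-1}}|g(x_1,y')|^2\,dy'\,dx_1$, which in turn is bounded by $\int_{\mathbb{R}^d\setminus\mathcal{R}}|g|^2$. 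For the frequency side, a direct calculation identifies $\widehat{f_{x',\xi'}}(\eta_1)$ with the partial Zak transform in $(x',\xi')$ of the partial Fourier transform of $g$ in the first coordinate; combining the Parseval identity for this Zak transform with Plancherel in the transverse variables bounds the integrated frequency tail by $\int_{\mathbb{R}^d\setminus\mathcal{L}}|\widehat{g}|^2$. Adding the two estimates delivers the claimed inequality.

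For the sharpness assertion, the natural candidate is a tensor product: take the Nitzan-Olsen one-dimensional sharp example in the $x_1$ variable and multiply by well-localized Gabor Riesz basis generators in each of the remaining coordinates; the product remains a Riesz basis and the tails in directions other than the first are negligible by construction. I expect the main technical obstacle to be the first step above, namely the a.e.\ Riesz basis property of $f_{x',\xi'}$ with uniform bounds, which requires a careful interplay between the partial and full Zak transforms and is what actually transfers the higher-dimensional Riesz basis structure onto each slice. Once that is in place, the remaining steps reduce to routine invocations of Parseval-Plancherel identities.
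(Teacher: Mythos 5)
Your approach is correct and genuinely different from the paper's. The paper does \emph{not} invoke the one-dimensional Nitzan--Olsen theorem as a black box; instead it re-derives, in the higher-dimensional setting, the chain of lemmas on quasiperiodic functions (a discretized lower bound on the variation of a branch of the argument, then a measure estimate for the ``fast variation'' set, then Lemma~3 comparing $Zg$ with $Z(g\ast\phi)$ on that set), and finishes by choosing anisotropic dilates $\phi,\psi$ of a bump $\rho$ and applying Plancherel. Your route instead slices at the level of the function itself: the partial Zak transform $Z'$ in the last $d-1$ coordinates produces a one-parameter family $f_{x',\xi'}\in L^2(\mathbb{R})$ with $Z_1f_{x',\xi'}(x_1,\xi_1)=Zg\bigl((x_1,x'),(\xi_1,\xi')\bigr)$, so the hypothesis $A\le|Zg|^2\le B$ transfers (by Fubini) to uniform Riesz bounds for almost every slice; you then apply the 1D theorem to each slice and integrate, using the isometry property of $Z'$ and Plancherel in the transverse variables to identify the two integrated tails with $\int_{\mathbb{R}^d\setminus\mathcal{R}}|g|^2$ and $\int_{\mathbb{R}^d\setminus\mathcal{L}}|\widehat g|^2$ respectively, since $\{|x_1|\ge R_1\}\times\mathbb{R}^{d-1}\subseteq\mathbb{R}^d\setminus\mathcal{R}$ and likewise on the frequency side. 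This is shorter and more modular, at the price of treating the 1D result as a black box; the paper's self-contained argument also exhibits the higher-dimensional analogue of the convolution lemma (Lemma~3), which has independent interest. The two are thematically related (the paper's Lemma~3 proof also freezes all but one pair of coordinates), but the reduction you propose operates at the top level rather than inside the Zak-transform machinery.

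One caveat on the sharpness discussion: your suggestion to tensor the Nitzan--Olsen example with ``well-localized Gabor Riesz basis generators'' in the remaining coordinates is misleading, since by the Balian--Low theorem no one-dimensional Gabor Riesz basis generator is simultaneously well-localized in time and frequency. The correct (and simplest) move, which is what the paper does, is to tensor $f$ with itself in every coordinate; the resulting tails sum to $\sum_i\bigl(R_i^{-2}+L_i^{-2}\log L_i\bigr)$, and choosing $R_i=R$, $L_i=R\log^{1/2}R$ yields the contradiction with an improved bound of the form $C\log(R_iL_i)/(R_iL_i)$. Your main inequality argument, however, is sound.
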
 
We observe that  the theorem allows us to choose the index $i$ that makes the right hand side largest. Since we must have $R_iL_i  
\leq |\mathcal{R}|^{\frac{1}{d}}|\mathcal{R}|^{\frac{1}{d}}$ at least for some values of $i$, the term 
 $C/R_iL_i$  can be replaced by $C/|\mathcal{R}|^{\frac{1}{d}}|\mathcal{L}|^{\frac{1}{d}}$ in the teorem.

The rest of the paper is organized as follows. We  introduce in the second section some standard definitions and results that will be used for the rest of the paper. Then we give  certain properties of quasiperiodic functions that Nitzan and Olsen uncovered in their work. In section 4 we use these properties to prove our estimate, and then discuss certain extensions of it. We will also,  
 using a function introduced in \cite{bcgp}, construct a function to show that our estimate is sharp. 


\section{Preliminaries}
In this section we will introduce concepts that will be used throughout the rest of the paper. Further information on all of these concepts can be found in \cite{gr}.  We start with Riesz bases. For a separable Hilbert space $H$, a system $\{v_n\}$ in $H$ is a Riesz basis 
  if it is complete in $H$ and
\begin{equation*}
    A \sum|a_n|^2 \leq \|\sum a_nv_n \|^2 \leq B \sum|a_n|^2
\end{equation*}
for any  sequence $\{a_n\}\in \ell^2$ and two positive constants  $A$ and $B$. The largest such $A$ and
smallest such $B$  are called the
Riesz basis bounds. An equivalent definition of 
a Riesz basis is that  it is the image of an orthonormal basis
under a bounded and invertible linear operator.

We now introduce the Zak transform, which is an extremely useful tool in the study of Gabor systems.
    Let $g \in L^1(\mathbb{R}^d)$. The Zak transform of $g$ is defined for  $(x,y) \in \mathbb{R}^{2d}$ as
    \begin{equation*}
        Zg(x,y) = \sum_{k \in \mathbb{Z}^d} g(x-k) e^{2\pi i k\cdot y}.
    \end{equation*}   
It is immediate from this definition and the Plancherel theorem   that  the Zak transform induces a unitary operator from $ L^2(\mathbb{R}^d)$ to $ L^2([0,1]^{2d})$. Thus for $g \in L^2(\mathbb{R}^d)$, the Zak transform   $Zg$ takes complex values for almost all $(x,y)\in \mathbb{R}^{2d}$. We let $e_1,e_2, \ldots, e_d$ be the canonical basis of $\mathbb{R}^d$.  For $g \in L^2(\mathbb{R}^d)$ and $1\leq i \leq d$ the function $Zg$  satisfies
\begin{equation}\label{qp}
    Zg(x,y+e_i) = Zg(x,y), \qquad \text{and} \qquad Zg(x+e_i,y) = e^{2\pi i  y_i} Zg(x,y).
\end{equation}
 We call this property quasiperiodicity. The Zak transform  relates to the Fourier transform as follows 
\begin{equation}\label{zf}
    Z\widehat{g}(x,y) = e^{2\pi i x\cdot y} Zg(-y,x).
\end{equation}
 Furthermore, we have for any Schwarz class function $\phi$,
\begin{equation}\label{p3} 
    Z(g \ast \phi) = Zg \ast_1 \phi,
\end{equation}
where for $Zg=Zg(x,y)$ notation $\ast_1$ means  convolution in the first variable $x$. With the Zak transform we can easily characterize the Gabor systems that are  Riesz bases: a Gabor system $G(g)$ is a Riesz basis if and only if 
\begin{equation} \label{zb}
    A \leq |Zg(x,y)|^2 \leq B, 
\end{equation}
where $A,B$ are Riesz basis bounds. This fact makes the Zak transform a   fundamental tool in the study of  the Gabor systems.

\section{Properties of quasiperiodic functions}

Nitzan and Olsen deduced their result by quantifying discontinuous  behovior of arguments of quasiperiodic functions. It is well known that  a branch of  the argument of a quasiperiodic function on $\mathbb{R}^2$ cannot be continuous. Nitzan and Olsen went further, and quantified this fact with the following lemma. For the sake of completeness we provide a proof here.

\begin{lemma}\label{lem1}
 Let $G$ be a complex valued quasiperiodic function on $\mathbb{R}^2$, and let $H$ be a branch of its argument, that is 
\[
G(x,y)=|G(x,y)|e^{2\pi i H(x,y)}.
\]
  Let $k,n\geq 8$  be two integers, and let $(x,y)\in [0,1/k)\times[0,1/n)$. Then there exist two integers $1\leq i< k, \ 1\leq j <n$ such that at least one of the following is true for every $m \in \mathbb{Z}$

\begin{equation*}
\begin{aligned}
&|H(x+{(i+1)}/{k},y+{j}/{n})-H(x+{i}/{k},y+{j}/{n})-m|    > 1/8,  \\
&|H(x+{i}/{k},y+{(j+1)}/{n})-H(x+{i}/{k},y+{j}/{n})-m|    >  1/8.
\end{aligned}
\end{equation*}

\end{lemma}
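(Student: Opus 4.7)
My plan is to argue by contradiction. Suppose for every $(i,j)$ with $1\leq i\leq k-1$ and $1\leq j\leq n-1$ the horizontal increment $\Delta_{1,i,j}=H(x+(i+1)/k,y+j/n)-H(x+i/k,y+j/n)$ and the vertical increment $\Delta_{2,i,j}=H(x+i/k,y+(j+1)/n)-H(x+i/k,y+j/n)$ both lie within $1/8$ of some integer. Write each such controlled increment as $\Delta=m+\tilde\Delta$ with $m\in\mathbb Z$ and $|\tilde\Delta|\leq 1/8$. The first step is a discrete closed-cell observation: for any elementary grid cell with $1\leq i\leq k-2$ and $1\leq j\leq n-2$, all four bounding increments are controlled, and the algebraic identity $\Delta_{1,i,j}+\Delta_{2,i+1,j}-\Delta_{1,i,j+1}-\Delta_{2,i,j}=0$ combined with the bound $4\cdot(1/8)=1/2$ forces the corresponding sum of integer parts $m$ to vanish. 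Hence the fractional parts $\tilde\Delta$ satisfy the closed-cell identity exactly on the interior, and horizontal $\tilde\Delta_1$-differences telescope into bounded vertical $\tilde\Delta_2$-sums along any interior rectangle.

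Next I would bring in quasiperiodicity. The relation $G(u+1,v)=e^{2\pi iv}G(u,v)$ gives $\sum_{i=0}^{k-1}\Delta_{1,i,j}\equiv y+j/n\pmod 1$ for each $1\leq j\leq n-1$. Differencing this between $j$ and $j+1$, substituting the algebraic identity $\Delta_{1,0,j+1}-\Delta_{1,0,j}=\Delta_{2,1,j}-\Delta_{2,0,j}$ (a special case of the closed-cell identity at $(0,j)$), and using the interior telescoping from the first step, I would obtain the key formula
\[
\Delta_{2,0,j}\equiv\tilde\Delta_{2,k-1,j}+\tilde\Delta_{1,k-1,j+1}-\tilde\Delta_{1,k-1,j}-\frac{1}{n}\pmod 1,\qquad 1\leq j\leq n-2,
\]
which pins down an otherwise uncontrolled boundary increment modulo $\mathbb Z$ in terms of interior fractional parts plus a nontrivial shift $-1/n$. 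A symmetric derivation using the vertical quasiperiodicity $G(u,v+1)=G(u,v)$ yields a parallel formula for $\Delta_{1,i,0}$, this time without the $-1/n$ shift, since the vertical transition law carries no $v$-factor.

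Finally, combining these formulas with the global constraints $\sum_{j=0}^{n-1}\Delta_{2,0,j}\in\mathbb Z$ and $\sum_{i=0}^{k-1}\Delta_{1,i,0}\equiv y\pmod 1$, and using the closed-cell identities at the corner cells $(0,0)$ and $(0,n-1)$ to express the remaining boundary values $\Delta_{2,0,0}$ and $\Delta_{2,0,n-1}$ in terms of controlled interior data, I arrive at an identity in which a bounded combination of interior $\tilde\Delta$'s is congruent, modulo $\mathbb Z$, to an accumulated shift of the form $c/n$ for a fixed nonzero integer $c$ coming from summing $-1/n$ over the telescoped range. For $k,n\geq 8$ and $|\tilde\Delta|\leq 1/8$ the bound on the controlled side is strictly smaller than $\|c/n\|_{\mathbb R/\mathbb Z}$, yielding the desired contradiction. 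The main obstacle is precisely this final accounting: tracking the precise cancellations so that, after all the telescoping and substitutions, the controlled side stays strictly below the obstruction threshold. The constants in the statement, namely $1/8$ and the hypothesis $k,n\geq 8$, are tailored exactly for this comparison to close.
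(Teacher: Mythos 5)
Your approach takes a genuinely different route from the paper. The paper argues by contradiction via an explicit inductive construction of integer lifts $h_{i,j}$ of $H$ at the grid points: fix $h_{0,0}$, pick the remaining lifts so adjacent differences are small, and then iterate row by row to show all horizontal increments remain at most $1/8$ in absolute value; the contradiction comes from evaluating the two-dimensional telescoping sum $(h_{k,n}-h_{k,0})-(h_{0,n}-h_{0,0})$ in two ways, once via quasiperiodicity (getting $0$) and once from the construction (getting $1$). You instead work purely modulo $\mathbb{Z}$, introducing the discrete curl (closed-cell) identity $\Delta_{1,i,j}+\Delta_{2,i+1,j}-\Delta_{1,i,j+1}-\Delta_{2,i,j}=0$, observing the four-term bound $4\cdot\tfrac18<1$ forces the integer parts to cancel on fully controlled cells, and then telescoping these against the quasiperiodicity constraints. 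Your key formula $\Delta_{2,0,j}\equiv\tilde\Delta_{2,k-1,j}+\tilde\Delta_{1,k-1,j+1}-\tilde\Delta_{1,k-1,j}-\tfrac1n\pmod{1}$ does follow cleanly from the cell identity at $(k-1,j)$ combined with the horizontal quasiperiodicity relation $\Delta_{2,k,j}\equiv\Delta_{2,0,j}+\tfrac1n$, for the interior range $1\le j\le n-2$. Conceptually this is a more structural way to see the winding obstruction, and it would give the cleanest path to a generalization.

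That said, there is a genuine gap in the final step. You propose to treat the two remaining uncontrolled values $\Delta_{2,0,0}$ and $\Delta_{2,0,n-1}$ by ``using the closed-cell identities at the corner cells $(0,0)$ and $(0,n-1)$ to express the remaining boundary values in terms of controlled interior data.'' But under the stated hypothesis, which constrains only $1\le i<k$ and $1\le j<n$, the cell at $(0,0)$ has all four bounding increments uncontrolled ($\Delta_{1,0,0}$, $\Delta_{2,1,0}$, $\Delta_{1,0,1}$, $\Delta_{2,0,0}$ all lie on the excluded $i=0$ or $j=0$ strips), and the cell at $(0,n-1)$ is similarly afflicted. So the integer parts cannot be forced to cancel there, and neither corner increment is pinned down modulo $\mathbb{Z}$ by interior data. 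You yourself flag ``the final accounting'' as the main obstacle, and this is exactly where it bites: you have $n-2$ formulas for $\Delta_{2,0,j}$, each carrying a $-1/n$ shift, but the global vertical constraint $\sum_{j=0}^{n-1}\Delta_{2,0,j}\in\mathbb{Z}$ also involves the two uncontrolled corner terms, and there is no independent control on them. Note that the paper's proof quietly uses the hypothesis at $j=0$ and $i=0$ as well (e.g.\ to choose $h_{i,0}$ with $|h_{i,0}-h_{i-1,0}|\le 1/8$, and to conclude $|h_{i,1}-h_{i-1,1}-m_{i,1}|\le 1/8$ for $i=1$, which is the increment $\Delta_{1,0,1}$); the index range in the lemma statement should really be $0\le i<k$, $0\le j<n$. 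If you adopt that corrected range, the boundary strips become controlled, the corner-cell identities do pin down the remaining terms, and a version of your accounting can be made to close. As written against the stated range, the proposal does not yet constitute a proof.
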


\begin{proof}
We assume to the contrary that there is a branch of the argument $H$ for which the claim  does not hold for a point $(x,y)\in [0,1/k)\times[0,1/n)$, with $k,n \geq 8$. We let for $i,j$ integers $h_{i,j}$ denote 
$H(x+i/k,y+j/n)$. We observe that if this $H$ presents a counterexample to the lemma, then so does infinitely many others, for by adding integers to $H$ at points  $(x+i/k,y+j/n)$ we obtain other counterexamples.
Since $H$ can be chosen from an infinite collection of counterexamples,   we can, to some extent, dictate the values $h_{i,j}$. Below we will do this to obtain a contradiction with the quasiperiodicity.

We fix $h_{0,0}$, and choose $h_{i,0}, \  1\leq i \leq  k$ so as to satisfy $|h_{i,0}-h_{i-1,0}|\leq 1/8$. Thus given $h_{0,0}$  fixed, we choose $h_{i,0}, \  1<i<k$ one by one, starting with $h_{1,0}$, so that their distance from the choice before is not more than $1/8$.  Now we have  $h_{i,0}, \  0 \leq i \leq  k$   are all fixed.  Using  $h_{i,0}, \  0 \leq i < k$  we choose $h_{i,j}, \  1 \leq j \leq  n$ so as to satisfy $|h_{i,j}-h_{i,j-1}|\leq 1/8$. Finally we choose $h_{k,j}$ for $1<j\leq n$. By quasiperiodicity we must have $h_{k,0}=h_{0,0}+y+l$ for some  integer $l$. We choose $h_{k,j}=h_{0,j}+y+j/n+l$ for $1<j\leq n$. Thus we have $|h_{k,j}-h_{k,j-1}|\leq 1/4$ for $1\leq j \leq n$. 

We claim that with these choices we also have $|h_{i,n}-h_{i-1,n}|\leq 1/8$ for $1\leq i \leq k$.
This we will prove through an iteration. We observe that since $H$ is assumed to be a counterexample to the lemma,  $|h_{i,1}-h_{i-1,1}-m_{i,1}|\leq 1/8$ for an integer $m_{i,1}$ for each $1\leq i \leq k$. But it is also clear from the construction of $H$, and the triangle inequality that we have $|h_{i,1}-h_{i-1,1}|\leq  1/2$. Therefore, $m_{i,1}=0$ for each value of $i$. If we apply the same reasoning we can obtain that for each $0\leq j< n$ we have $|h_{i,j}-h_{i-1,j}|\leq 1/8$. By quasiperiodicity  $|h_{i,n}-h_{i-1,n}-m_{i,n}|\leq  1/8$, for some integer $m_{i,n}$ for each $1\leq i \leq k$. But we have just discovered that $|h_{i,n-1}-h_{i-1,n-1}|\leq 1/8$ for each $1\leq i \leq k$. This, together with the triangle inequality, and the construction of  $H$  establishes the claim.
  
  We now  obtain the contradiction promised by calculating two sides of the obvious equality $(h_{k,n}-h_{k,0})-(h_{0,n}-h_{0,0})=(h_{k,n}-h_{0,n})-(h_{k,0}-h_{0,0})$ in two different ways.  By quasiperiodicity of $H$, for any $0\leq i \leq k$ the difference $h_{i,n}-h_{i,0}$ must be an integer.  But since we know that $|h_{i,n}-h_{i-1,n}|\leq 1/8$, and  $|h_{i,0}-h_{i-1,0}|\leq 1/8$    for each $0< i \leq k$  the integers $h_{i,n}-h_{i,0}$, $h_{i-1,n}-h_{i-1,0}$ must be the same. Thus $h_{k,n}-h_{k,0}$ and $h_{0,n}-h_{0,0}$ must be the same, hence $(h_{k,n}-h_{k,0})-(h_{0,n}-h_{0,0})$ must be zero.  On the other hand, from our construction of $H$ we have $(h_{k,j}-h_{0,j})-(h_{k,j-1}-h_{0,j-1})=1/n$ for each $1\leq j\leq n$. Thus $(h_{k,n}-h_{0,n})-(h_{k,0}-h_{0,0})=1$. A contradiction.

\end{proof}
The lemma we have just proved  suggests that the set of  points for which  a branch of the argument of a quasiperiodic function changes very quickly must have a measure at least $k^{-1}\cdot n^{-1}$. The next lemma makes this rigorous.

\begin{lemma}\label{lem2}
    
   Let $A>0$ be a  constant, and let $G$ be a complex valued quasiperiodic function on $\mathbb{R}^2$ with $|G|\geq A$. Then for any two integers   $k,n\geq 8$ we have a  set $S \subseteq [0,1]^2$ of measure
    at least $k^{-1}\cdot n^{-1}$, such that for all $(x,y) \in S$ we have
    \begin{equation*}
         |G  ( x+k^{-1}, y )-G  ( x, y )| \geq A/3 \ \ \ \text{or} \ \ \
        | G  ( x, y+n^{-1})- G  ( x, y)| \geq A/3.
    \end{equation*}
\end{lemma}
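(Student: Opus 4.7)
The plan is to convert Lemma~\ref{lem1}, a statement about argument jumps on a shifted $k\times n$ grid, into a pointwise lower bound on the magnitudes $|G(P_2)-G(P_1)|$, and then use a measure-preserving grid translation to show that the set $S$ of points where such a jump occurs has measure at least $1/(kn)$.

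Since $|G|\geq A>0$, we can write $G=|G|e^{2\pi i H}$ with $H$ a branch of the argument, so Lemma~\ref{lem1} applies. The key pointwise sublemma is the following: if $P_1,P_2\in\mathbb{R}^2$ satisfy $|G(P_i)|\geq A$ and $|H(P_2)-H(P_1)-m|>1/8$ for every $m\in\mathbb{Z}$, then $|G(P_2)-G(P_1)|\geq A/3$. Indeed, setting $r_i:=|G(P_i)|$ and $\theta:=2\pi(H(P_2)-H(P_1))$, the hypothesis forces $\theta\bmod 2\pi\in(\pi/4,7\pi/4)$, hence $\cos\theta<\sqrt{2}/2$, so
\[
|G(P_2)-G(P_1)|^2=r_1^2+r_2^2-2r_1r_2\cos\theta>r_1^2+r_2^2-\sqrt{2}\,r_1r_2\geq (2-\sqrt{2})A^2,
\]
where the last step is a direct minimization of the quadratic form over $r_1,r_2\geq A$ (easiest via completing the square in $r_1$, giving the minimum at the corner $r_1=r_2=A$). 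Since $\sqrt{2-\sqrt{2}}\approx 0.76>1/3$, the claim follows with comfortable slack.

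For the measure bound, define
\[
S:=\bigl\{(u,v)\in[0,1]^2 : |G(u+1/k,v)-G(u,v)|\geq A/3 \text{ or } |G(u,v+1/n)-G(u,v)|\geq A/3\bigr\},
\]
and for $0\leq i<k$, $0\leq j<n$ let $Q_{i,j}:=[i/k,(i+1)/k)\times[j/n,(j+1)/n)$. The cells $Q_{i,j}$ partition $[0,1)^2$, and the translation $\phi_{i,j}(x,y):=(x+i/k,y+j/n)$ is a measure-preserving bijection from $[0,1/k)\times[0,1/n)$ onto $Q_{i,j}$. Set $S_{i,j}:=\phi_{i,j}^{-1}(S\cap Q_{i,j})\subseteq[0,1/k)\times[0,1/n)$. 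Lemma~\ref{lem1} and the pointwise step above ensure that for every $(x,y)\in[0,1/k)\times[0,1/n)$ there exist integers $1\leq i<k$ and $1\leq j<n$ with $\phi_{i,j}(x,y)\in S$, i.e.\ $(x,y)\in S_{i,j}$. Thus $\bigcup_{i,j}S_{i,j}=[0,1/k)\times[0,1/n)$, and using disjointness of the $Q_{i,j}$ together with subadditivity,
\[
|S|\geq\sum_{i,j}|S\cap Q_{i,j}|=\sum_{i,j}|S_{i,j}|\geq\Bigl|\bigcup_{i,j}S_{i,j}\Bigr|=\frac{1}{kn}.
\]

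The only delicate piece is the angle-to-magnitude conversion in the first step; everything else is a disjoint-union count over grid cells. Because the constant coming out of the conversion already exceeds $1/3$ by a healthy margin, even this step presents no real obstacle, and the rest of the proof is essentially bookkeeping with the measure-preserving translations $\phi_{i,j}$.
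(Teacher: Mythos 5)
Your proof is correct and follows the same overall strategy as the paper: apply Lemma~\ref{lem1} to a measurable branch of the argument, convert the resulting argument jump into a jump in the magnitude of $G$, and transfer measure along the grid translations. The one genuine variation is in the pointwise conversion step. The paper splits into two cases according to whether $\bigl||G(P_2)|-|G(P_1)|\bigr|$ exceeds $A/3$; in the harder case it writes $G(P_2)-G(P_1)=(|G(P_2)|-|G(P_1)|)e^{2\pi i H(P_2)}+|G(P_1)|(e^{2\pi i H(P_2)}-e^{2\pi i H(P_1)})$ and combines the triangle inequality with the estimate $|e^{2\pi i[H(P_2)-H(P_1)]}-1|>2/3$, landing exactly on $A/3$. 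You instead invoke the law of cosines $|G(P_2)-G(P_1)|^2=r_1^2+r_2^2-2r_1r_2\cos\theta$ with $\cos\theta<\sqrt{2}/2$ and minimize the quadratic over the quadrant $r_1,r_2\geq A$, which avoids the case split and yields the stronger constant $\sqrt{2-\sqrt{2}}\,A\approx 0.76A$ in one step. The measure bookkeeping is the same cell-counting argument viewed in the opposite direction: the paper translates its exceptional subsets of $[0,k^{-1})\times[0,n^{-1})$ outward into disjoint cells $Q_{i,j}$ and unions them to build $S$, while you define $S$ first and pull it back into the corner cell via $\phi_{i,j}^{-1}$ to verify the covering. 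Both are equivalent, and your version is arguably the cleaner write-up.
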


\begin{proof}
Let $H$ be a measurable branch of the argument of $G$. We will apply the Lemma 1 to this $H.$
Let $1\leq i<k, \ 1\leq j<n$, and let $m$ be an integer. Let $S'_{i,j,m,1}$ be the set of all $(x,y)\in [0,k^{-1})\times[0,n^{-1})$  for which the first inequality of the previous lemma holds. We similarly define $S'_{i,j,m,2}$. Clearly these sets are measurable. From these sets we define 
\[S'_{i,j,1}:=\bigcap_{m\in \mathbb{Z}}S_{i,j,m,1}, \ \ \   S'_{i,j,2}:=\bigcap_{m\in \mathbb{Z}}S_{i,j,m,2},  \]  
and we let $S'_{i,j}:=S'_{i,j,1}\cup S'_{i,j,2}$. Then from the previous lemma, we have the equality
\[ [0,k^{-1})\times[0,n^{-1})=  \bigcup_{i,j}S'_{i,j} \]
Thus, sum of measures of the sets on the right hand side is at least $k^{-1}\cdot n^{-1}$.  For each $i,j$ we define  $S_{i,j}$ to be  the translate of $S'_{i,j}$ by $(i,j)$. Thus the sets  $S_{i,j}$ are disjoint, and  for fixed $i,j$ the set $S_{i,j}$ has the  same measure as $S'_{i,j}$.  If  we define $S$ to be the union of all $S_{i,j}$, its measure is at least $k^{-1}\cdot n^{-1}$, and for an element $(x,y) \in S$ one of the following is true for all integers $m$
\begin{equation}\label{lem1}
         |H  ( x+k^{-1}, y )-H  ( x, y )-m| > 1/8, \  \ \ 
        | H  ( x, y+n^{-1})- H  ( x, y)-m| > 1/8.
   \end{equation}
Now suppose the first inequality is true for all $m$. We know that $|G(x,y)|,|G  ( x+k^{-1}, y )|\geq A$, but we do not know their exact relation to each other , and this prevents us from immediately concluding the proof. To circumvent this we proceed in two cases.  If 
 $ ||G  ( x+k^{-1}, y )| -|G  ( x, y )|| \geq A/3$, then
we have the  crude estimate
\[|G  ( x+k^{-1}, y )-G  ( x, y )|  \geq  ||G  ( x+k^{-1}, y )|-|G(x,y)|| \geq A/3.\]
If, on the other hand $ ||G  ( x+k^{-1}, y )| -|G  ( x, y )|| < A/3$, then by adding and subtracting the same term we can write $|G  ( x+k^{-1}, y )-G  ( x, y )|$  as
\[
 |[|G  ( x+k^{-1}, y )|-|G  ( x, y )|]e^{2\pi i  H( x+k^{-1}, y )}+|G  ( x, y )|[e^{2\pi i  H( x+k^{-1}, y )}-e^{2\pi i  H( x, y )}]|.
\]
This, by triangle inequality, cannot be less than
\[
 |G  ( x, y )||e^{2\pi i  H( x+k^{-1}, y )}-e^{2\pi i  H( x, y )}|-||G  ( x+k^{-1}, y )|-|G  ( x, y )||
\]
We observe that  
\begin{equation*} 
 \begin{aligned}
 |e^{2\pi i  H( x+k^{-1}, y )}-e^{2\pi i  H( x, y )}|&= |e^{2\pi i  H( x, y )}[ e^{2\pi i [H( x+k^{-1}, y )- H( x, y )]}-1]|\\ &=| e^{2\pi i [H( x+k^{-1}, y )- H( x, y )]}-1|
  \end{aligned}
\end{equation*} 
We know that the distance of $H( x+k^{-1}, y )- H( x, y )$ to any integer is more than $1/8$, which means that the last term is more than $2/3$. Thus returning with this information back to our estimate 
  \[|G  ( x, y )||e^{2\pi i  H( x+k^{-1}, y )}-e^{2\pi i  H( x, y )}|-||G  ( x+k^{-1}, y )|-|G  ( x, y )|| \geq A/3.\]
  Thus in any  case if the first inequality in \eqref{lem1}
holds for all integers, we have $|G  ( x+k^{-1}, y )-G  ( x, y )|\geq A/3$. Similarly if the second inequality  in \eqref{lem1}
holds for all integers, we have $|G  ( x, y+n^{-1} )-G  ( x, y )|\geq A/3$, and this concludes the proof.
\end{proof}


\section{Proof of the main result}
We shall start with a lemma that will be the fundamental tool in proving our theorem. The last lemma tells us that given a quasiperiodic function there is a set of certain size near which  the function changes rapidly. Therefore on this set the function must also differ from its average over balls of large enough size. The next lemma makes rigorous this idea, using convolutions with Schwartz class functions instead of averages over balls. 

\begin{lemma}\label{lem3}
    Let $A,B >0$, and let $1\leq i \leq d$.  Given two Schwartz  functions $\phi,\psi$ on $\mathbb{R}^d$,
    and any $g\in L^2(\mathbb{R}^d)$
    with
    $A\leq |Zg|\leq B$ almost everywhere, there is 
 a set $S_i\subseteq [0,1]^{2d}$ of measure 
    at least 
   ${A^2
   /4000 B^2(1+\|\phi_{i}\|_1)(1+\|\psi_{i}\|_1)}$ with $\phi_i,\psi_i$ denoting ith partial derivatives of $\phi,\psi$,   
   such that for all $(x,y) \in S_i$ we have
    \begin{equation*}
       |Zg(x,y) - Z(g \ast \phi)(x,y)|  \geq  A/12  \ \ \ or \ \ \
     |Z\hat{g}(x,y) - Z(\hat{g} \ast \psi)(x,y)| \geq  A/12.
    \end{equation*}
\end{lemma}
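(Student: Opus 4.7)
The plan is to reduce to Lemma 2 applied slicewise in the $(x_i, y_i)$-plane, and then convert each shift inequality it produces into one involving a convolution-difference. Fix the remaining $2d-2$ coordinates $(x_j, y_j)_{j\ne i}$. By \eqref{qp}, the restriction of $Zg$ to the $(x_i, y_i)$-plane is a quasiperiodic function on $\mathbb{R}^2$ of the form treated in Lemma 2 and is bounded below by $A$. Lemma 2 thus gives, for any integers $k, n \ge 8$, a measurable set in the slice of area at least $k^{-1}n^{-1}$ on which one of
\[
|Zg(x + k^{-1}e_i, y) - Zg(x, y)|\ge A/3, \qquad |Zg(x, y + n^{-1}e_i) - Zg(x, y)|\ge A/3
\]
holds. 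Fubini over the remaining coordinates produces a set $T \subseteq [0,1]^{2d}$ of measure at least $k^{-1}n^{-1}$. I decompose $T = T^a \cup T^b$ according to which of the two inequalities is satisfied.

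For $(x,y)\in T^a$ I argue by contradiction. Using \eqref{p3} and a change of variable,
\[
Zg\ast_1\phi(x+k^{-1}e_i, y) - Zg\ast_1\phi(x, y) = \int Zg(x - s, y)[\phi(s + k^{-1}e_i) - \phi(s)]\,ds,
\]
and writing $\phi(s + k^{-1}e_i) - \phi(s) = \int_0^{k^{-1}} \phi_i(s + re_i)\,dr$ together with $|Zg|\le B$, the above is bounded by $Bk^{-1}\|\phi_i\|_1$. If both $(x,y)$ and $(x+k^{-1}e_i, y)$ were to satisfy $|Zg - Z(g\ast\phi)| < A/12$, the triangle inequality combined with the $\ge A/3$ shift would force $Bk^{-1}\|\phi_i\|_1 \ge A/6$; choosing $k$ an integer exceeding $6B\|\phi_i\|_1/A$ rules this out, so at least one of the two points lies in $S_i^1 := \{|Zg - Z(g\ast\phi)|\ge A/12\}$. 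Because $Zg - Zg\ast_1\phi$ inherits the quasiperiodicity of $Zg$, its modulus is $1$-periodic in each variable, so the shifted point can be reduced mod $1$, yielding $|T^a|\le 2|S_i^1|$.

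The case $T^b$ proceeds in parallel, but requires first converting a $y_i$-shift of $Zg$ into a first-coordinate shift of $Z\widehat{g}$ through \eqref{zf}. Writing $Zg(x, y) = e^{2\pi i x\cdot y}Z\widehat{g}(y, -x)$ and subtracting,
\[
Zg(x, y+n^{-1}e_i) - Zg(x, y) = e^{2\pi i x\cdot y}\bigl[e^{2\pi ix_i/n}Z\widehat{g}(y+n^{-1}e_i, -x) - Z\widehat{g}(y, -x)\bigr],
\]
so $|e^{2\pi i x_i/n}-1|\le 2\pi/n$ and $|Z\widehat{g}|\le B$ give $|Z\widehat{g}(y+n^{-1}e_i, -x) - Z\widehat{g}(y, -x)|\ge A/3 - 2\pi B/n$, still at least $A/6$ once $n \ge 12\pi B/A$. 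The same convolution-difference argument, applied now to $Z\widehat{g}$ and $\psi$, shows that one of the points $(y,-x)$ and $(y+n^{-1}e_i, -x)$ belongs to $S_i^2 := \{|Z\widehat{g} - Z(\widehat{g}\ast\psi)|\ge A/12\}$ provided $n$ also exceeds a comparable multiple of $B(\|\psi_i\|_1 + 1)/A$. Since the torus map $(x,y)\mapsto(y,-x)$ preserves Lebesgue measure on $[0,1]^{2d}$, this gives $|T^b|\le 2|S_i^2|$.

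Putting everything together, $|S_i^1| + |S_i^2|\ge |T|/2 \ge k^{-1}n^{-1}/2$, so $S_i := S_i^1\cup S_i^2$ has measure at least $k^{-1}n^{-1}/4$. Explicit integer choices of $k$ and $n$ of order $(B/A)(1+\|\phi_i\|_1)$ and $(B/A)(1+\|\psi_i\|_1)$ respectively, combined with $A\le B$ from the Riesz basis bounds, yield the stated denominator $4000\,B^2(1+\|\phi_i\|_1)(1+\|\psi_i\|_1)/A^2$. The main obstacle I expect is case $T^b$: correctly translating the $y_i$-shift of $Zg$ through \eqref{zf} into a shift of $Z\widehat{g}$ at the different point $(y,-x)$, carefully accounting for the phase discrepancy $e^{2\pi ix_i/n}$, and using the measure-preserving coordinate swap on the torus to transfer the resulting bound back into a lower bound on $|S_i|$.
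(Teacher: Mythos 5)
Your overall strategy is the same as the paper's: apply Lemma~\ref{lem2} slicewise in the $(x_i,y_i)$-plane to obtain a set of measure $\geq k^{-1}n^{-1}$ on which $Zg$ jumps by $\geq A/3$ under a $k^{-1}$ or $n^{-1}$ shift, control the corresponding shifts of $Z(g\ast\phi)=Zg\ast_1\phi$ and $Z(\widehat g\ast\psi)$ by $Bk^{-1}\|\phi_i\|_1$ and $Bn^{-1}\|\psi_i\|_1$, split according to which inequality holds, and finish with a triangle-inequality and measure-counting argument (the reduction modulo $1$ you invoke corresponds to the paper's translation of pieces of $U'_{i_2}$, $V'_{i_2}$ back into $[0,1]^{2d}$).

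There is, however, a concrete error in your treatment of $T^b$. You choose $n\geq 12\pi B/A$ so that the phase loss $2\pi B/n$ is at most $A/6$ and hence $|Z\widehat g(y+n^{-1}e_i,-x)-Z\widehat g(y,-x)|\geq A/6$, and then assert that ``the same convolution-difference argument'' puts one of the two points in $S_i^2=\{|Z\widehat g-Z(\widehat g\ast\psi)|\geq A/12\}$. That step does not go through: if both points satisfied $|Z\widehat g-Z(\widehat g\ast\psi)|<A/12$, the triangle inequality would give $A/6\leq A/12+A/12+(\text{conv.\ loss})$, i.e.\ $\text{conv.\ loss}\geq 0$, which is no contradiction at all. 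Starting from $A/6$, the budget is already used up by the two $A/12$'s, with no slack left for the convolution term. The paper avoids this by taking $n_i\geq 24\pi B(1+\|\psi_i\|_1)/A$, which makes the phase loss $\leq A/12$ (so $|Z\widehat g\ \text{diff}|\geq A/4$) and the convolution loss $\leq A/72$, and then $A/4-A/72=17A/72>A/6$ does force one of the two points to satisfy $\geq A/12$. Your closing remark that ``$n$ also exceeds a comparable multiple of $B(\|\psi_i\|_1+1)/A$'' is compatible with this fix, but the stated threshold $12\pi B/A$ is too small, and this affects whether the $4000$ in the lemma is achievable by your accounting.

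A smaller gap: Lemma~\ref{lem2} requires $|G|\geq A$ pointwise, while $|Zg|\geq A$ holds only almost everywhere, and the slicewise application plus Fubini needs a measurability check. The paper handles this by replacing $Zg$ on a null set with a genuinely quasiperiodic $G$ satisfying $A\leq|G|\leq B$ everywhere, applying Lemma~\ref{lem2} to the slices of $G$, and then deleting the null set (and its two relevant translates) at the end. You should incorporate this modification rather than apply Lemma~\ref{lem2} directly to restrictions of $Zg$.
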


\begin{proof}
 For any $k_i>0$, and any $1\leq i \leq d$ from the property  \eqref{p3} of the Zak transform 
 $|Z(g \ast \phi)(x+k_i^{-1}\cdot e_i,y) - Z (g \ast \phi)(x,y) |$ can be written as 
     $|Zg \ast_1 \phi(x+k_i^{-1}\cdot e_i,y) - Z g \ast_1 \phi(x,y) | $. 
Then we have
\begin{equation*}
 \begin{aligned}
 &\leq \int_{\mathbb{R}^d}|Zg(u,y)||\phi(x-u+k_i^{-1}\cdot e_i)-\phi(x-u)|du\\
 &\leq B\cdot \int_{\mathbb{R}^d}|\phi(x-u+k_i^{-1}\cdot e_i)-\phi(x-u)|du.
 \end{aligned}
\end{equation*}
 Let $u_i\in \mathbb{R}$ denote $i$th coordinate of $u$ and let $\overline{u}\in \mathbb{R}^{d-1} $ with $\overline{u} $ being obtained from $u$ by removing $u_i$.   Since $\phi$ is a Schwartz function we can write, 
\begin{equation*}
 \begin{aligned}
  &= B\cdot \int_{\mathbb{R}^{d-1}}\int_{\mathbb{R}}|\phi(x-u+k_i^{-1}\cdot e_i)-\phi(x-u)|du_id\overline{u}\\
 &= B\cdot \int_{\mathbb{R}^{d-1}}\int_{\mathbb{R}}\int_{0}^{k_i^{-1}}|\phi_{i}(x-u+v_i\cdot e_i)|dv_idu_id\overline{u}\\
 &= B\cdot \int_{\mathbb{R}^{d-1}}\int_{0}^{k_i^{-1}}\int_{\mathbb{R}}|\phi_{i}(x-u+v_i\cdot e_i)|du_idv_id\overline{u}.
 \end{aligned}
\end{equation*}
 Observing that the inner integral is independent of $v_i$  we can write
\begin{equation*}
 \begin{aligned}
 &= B\cdot\int_{\mathbb{R}^{d-1}}\int_{0}^{k_i^{-1}}\int_{\mathbb{R}}|\phi_{i}(x-u)|du_idv_id\overline{u}\\
 &={B}\cdot{k_i}^{-1}\cdot\int_{\mathbb{R}^{d-1}}\int_{\mathbb{R}}|\phi_{i}(x-u)|du_id\overline{u}, 
 \end{aligned}
\end{equation*}
and obviously this last term is ${B}\cdot{k_i}^{-1}\cdot\|\phi_{i}\|_1$. Since we have the property \eqref{zf}, we can apply   the same process to   obtain for any $n_i>0$, and any $(x,y)\in \mathbb{R}^{2d}$ 
\begin{equation}\label{eqphi}
    \begin{aligned}
    |Z(\widehat{g} \ast \psi)(y+{n_i}^{-1}\cdot e_i,-x) - Z (\widehat{g} \ast \psi)(y,-x) |\leq {B}\cdot{n_i}^{-1}\cdot \|\psi_{i}\|_1.  
\end{aligned}
\end{equation}
 If we  
choose  $k_i,n_i\geq 8$  to be the smallest integers that satisfy
\begin{equation*}
 k_i \geq  \frac{8B}{A}(1+\|\phi_{i}\|_1) \qquad \text{and} \qquad n_i \geq  \frac{24\pi B}{A}(1+\|\psi_{i}\|_1),
\end{equation*}
we have
\begin{equation*}
    \begin{aligned}
     |Z(g \ast \phi)(x+{k}_i^{-1}\cdot e_i,y) - Z (g \ast \phi)(x,y) | &\leq  {A}/{8}\\ |Z(\widehat{g} \ast \psi)(y+{n_i}^{-1}\cdot e_i,-x) - Z (\widehat{g} \ast \psi)(y,-x) | &\leq  {A}/{72}  
\end{aligned}
\end{equation*}

We will use the properties of quasiperiodic functions derived in the last section  to obtain another estimate, which, combined with the last two will suffice to complete the proof. To this end we introduce a slight modification of  $Zg(x,y)$ as follows. Let   $G(x,y):=Zg(x,y)$ when $A \leq |Zg(x,y)|\leq B$, and when this is not the case, let $G(x,y)=B$ for  $(x,y) \in [0,1)^{2d}$, and  extend  it  so that it will be quasiperiodic. This $G$ is a measurable,  complex valued, quasiperiodic function , and $A \leq |G(x,y)|\leq B$ everywhere. Now we define 
$G_{\overline{x},\overline{y}}(x_i,y_i)=G(x,y)$ with $(\overline{x},\overline{y})$  denoting an element of $\mathbb{R}^{2d-2}$ obtained by removing $x_i,y_i$ from $(x,y)\in \mathbb{R}^{2d}$. To this $G_{\overline{x},\overline{y}}(x_i,y_i)$  we wish to apply the Lemma 2. We see that for any $(\overline{x},\overline{y})$ by definition  it is complex valued,  quasiperiodic, and satisfies $A \leq |G_{\overline{x},\overline{y}}(x,y)|\leq B$ for all $(x_i,y_i)$. Also by applying  Fubini-Tonelli theorem for complete measures, see Theorem 2.39 in \cite{fo}, or Theorem 8.12 in \cite{ru}, to $G\chi_{[0,1)^{2d}}$ we see that for almost all  $(\overline{x},\overline{y})\in \mathbb{R}^{2d-2}$ the function $G_{\overline{x},\overline{y}}\chi_{[0,1)^{2}}$  is measurable,  and hence by quasiperiodicity for almost all  $(\overline{x},\overline{y})\in \mathbb{R}^{2d-2}$ the function $G_{\overline{x},\overline{y}}(x,y)$  is measurable. Thus we can apply Lemma 2 to this function for almost all    $(\overline{x},\overline{y})\in [0,1)^{2d-2}$. 
Let $S_{\overline{x},\overline{y},i}$ be the set described in Lemma 2 for such a point  $(\overline{x},\overline{y})$ with $k_i,n_i$ as chosen above. Then for $(x_i,y_i)\in S_{\overline{x},\overline{y},i}$ we have
  \begin{equation*}
          |G_{\overline{x},\overline{y}}  ( x_i+k_i^{-1}, y_i )-G_{\overline{x},\overline{y}}  ( x_i, y_i )|  \geq A/3 \ \ \text{or} \ \
         | G_{\overline{x},\overline{y}}  ( x_i, y_i+n_i^{-1})- G_{\overline{x},\overline{y}}  ( x_i, y_i)| \geq A/3.
     \end{equation*}
We let $S''_i$ to be the set of $(x,y)\in [0,1)^{2d}$, such that $(x_i,y_i)\in S_{\overline{x},\overline{y},i}$.  This set has measure at least $k_i^{-1}\cdot n_i^{-1}$, and for elements of this set we have
   \begin{equation*}
         |G(x+k_i^{-1}\cdot e_i,y) - G (x,y)| \geq  A/3 \ \ \text{or} \ \
       |G(x,y+n_i^{-1}\cdot e_i) - G (x,y)| \geq  A/3.
      \end{equation*}
Since the set $F$ of points for which $Zg\neq G$ has measure zero if we remove from  $S''_i$ the set $F$ and  its translations by $-k_i^{-1}\cdot e_i$ and, $-n_i^{-1}\cdot e_i$, the remainder has the same measure as $S''_i$ and  for $(x,y)$ in this  remainder, which we will denote by $S'_i$, we have
   \begin{equation*}
         |Zg(x+k_i^{-1}\cdot e_i,y) - Zg (x,y)| \geq  A/3 \ \ \text{or} \ \
       |Zg(x,y+n_i^{-1}\cdot e_i) - Zg (x,y)| \geq  A/3.
      \end{equation*}

 We let $S_i'=U_i'\cup V_i'$ where for elements of  $U_i'$  the first of these inequalities holds, and for elements of  $V_i'$  the second one holds. Then for $(x,y)\in U_i'$ we have 

\begin{equation*}
      \begin{aligned}
       &|Zg(x,y) - Z(g \ast \phi)(x,y)| + |Zg(x+k_i^{-1}\cdot e_i,y) - Z(g \ast \phi)(x+k_i^{-1}\cdot e_i,y)|
     \\
       \geq &|Zg(x+k_i^{-1}\cdot e_i,y) -Zg(x,y)  -Z(g \ast \phi)(x+k_i^{-1}\cdot e_i,y)+ Z(g \ast \phi)(x,y)  |
     \\
     \geq & |Zg(x+k_i^{-1}\cdot e_i,y) - Zg (x,y)| - |Z(g \ast \phi)(x+k_i^{-1}\cdot e_i,y) - Z (g \ast \phi)(x,y) |
    \\
        \geq & A/6
    \end{aligned}
    \end{equation*}
Therefore one of the following is certainly true for any element $(x,y)\in U_i'$
\begin{equation*}
      \begin{aligned}
       |Zg(x,y) - Z(g \ast \phi)(x,y)| &\geq A/12 \\  |Zg(x+k_i^{-1}\cdot e_i,y) - Z(g \ast \phi)(x+k_i^{-1}\cdot e_i,y)|   &\geq A/12.   
    \end{aligned}
    \end{equation*}
 We thus have $ U_i'= U_{i_1}' \cup U_{i_2}'$ with the element $(x,y)$  belonging to $U_{i_1}'$ set if the first inequality holds, and to  $U_{i_2}'$ if the second holds, and to both if both inequalities hold. Obviously at least one of these sets have a measure not less than  half of the measure of $U_i'$. Thus if  we consider  the union of $U_{i_1}'$ and the translate of $U_{i_2}'$ by $(k_i^{-1}\cdot e_i,0)$, then its measure is not less than half the measure of $U_i'$. But it may be that some of elements of this union are not in $[0,1]^{2d}$ due to the translation. Therefore we define $U_i$ to be the union of the set of all elements in $[0,1]^{2d}$ and the set of all elements outside $[0,1]^{2d}$ translated by $(-e_i,0)$. This $U_i$ then has at least a quarter of the measure of $U_i'.$   And any element of this set satisfy the first inequality of our lemma.
 
 We now turn to $(x,y)\in V_i'$. We have the equation \eqref{eqphi}. We also have
 \begin{equation*}
       \begin{aligned}
        &|Z\widehat{g}(y+n_i^{-1}\cdot e_i,-x) - Z\widehat{g} (y,-x)|\\
         \geq  &| e^{-2\pi i (x\cdot y+n_i^{-1}x_i)}Z{g}(x,y+n_i^{-1}\cdot e_i) - e^{-2\pi i x\cdot y}Z{g} (x,y)| \\ 
        \geq  &| e^{-2\pi i n_i^{-1}x_i}Z{g}(x,y+n_i^{-1}\cdot e_i) - Z{g} (x,y)| \\ \geq
            &| e^{-2\pi i n_i^{-1}x_i}[Z{g}(x,y+n_i^{-1}\cdot e_i)- Z{g}(x,y)] -Z{g}(x,y)[1-e^{-2\pi i n_i^{-1}x_i}]|  
  \\ \geq
              &| Z{g}(x,y+n_i^{-1}\cdot e_i)- Z{g}(x,y)| -|Z{g}(x,y)[1-e^{-2\pi i n_i^{-1}x_i}]| 
  \\ \geq
                 &| Z{g}(x,y+n_i^{-1}\cdot e_i)- Z{g}(x,y)| -B\cdot|1-e^{-2\pi i n_i^{-1}x_i}| 
  \end{aligned}
     \end{equation*}
 We can easily estimate, using the unit circle, that $|1-e^{-2\pi i n_i^{-1}x_i}| \leq A/12B$, thus the last term is not less than $A/4.$
 Combining this with \eqref{eqphi}, as we did in the case of elements of $U_i'$, we have for $(x,y)\in V_i'$ one of the following certainly true
 \begin{equation*}
       \begin{aligned}
       |Z\widehat{g}(y,-x) - Z(\widehat{g} \ast \psi)(y,-x)| &\geq A/12 \\  |Z\widehat{g}(y+n_i^{-1}\cdot e_i,-x) - Z(\widehat{g} \ast \psi)(y+n_i^{-1}\cdot e_i,-x)|   &\geq A/12. 
     \end{aligned}
     \end{equation*}
 We let $ V_i'= V_{i_1}' \cup V_{i_2}'$ as before, and  obviously at least one of these sets have a measure not less than  half  the measure of $V_i'$. Thus if  we consider  the union of $V_{i_1}'$ and the translate of $V_{i_2}'$ by $(0,n_i^{-1}\cdot e_i)$, then its measure is not less than half the measure of $V_i'$. But it may be that some of the elements of this set are not in $[0,1]^{2d}$ due to the traslation applied. We therefore take  the union of elements in $[0,1]^{2d}$ with $(0,-e_i)$ translates of those that are not in $[0,1]^{2d}$, and if  we set  $V_{i}$ to be  the set of points  $(y,-x)$ such that $(x,y)$ is in this last union, its measure is not less than a quarter of that of $V_i'$, it lies entirely in $[0,1]^{2d}$ and     any element of it satisfy the second  inequality of our lemma.   We finally define $S_i=U_i\cup V_i$ and easily observe that it  satisfies all of the required   properties.
\end{proof}

We will use what we learned from the study of the Zak transform of Riesz basis generators to prove Theorem 1.
Let $g $ be a function as in the theorem. 
We pick  a Schwarz class function $\rho$ on $\mathbb{R}^d$ such that $\widehat{\rho}$ is radially  symmetric, and satisfies
$|\widehat{\rho}|\leq 1$ everywhere, and
\begin{equation*}
     \widehat{\rho}(\xi) = \left\{ \begin{split}  1 & \quad \text{if} \quad |\xi| \leq 1,
     \\
      0 & \quad \text{if} \quad |\xi| \geq 2. \end{split} \right.
\end{equation*}
We define two Schwarz functions $\phi,\psi$ by
\begin{equation*}
   \begin{aligned}
    \phi(x_1,x_2,\ldots,x_d)&: = R_1 R_2\ldots R_d\cdot \rho(R_1x_1,R_2x_2,\ldots,R_dx_d)\\
     \psi(x_1,x_2,\ldots,x_d)&: = L_1 L_2\ldots L_d\cdot \rho(L_1x_1,L_2x_2,\ldots,L_dx_d).
    \end{aligned}
\end{equation*}
Therefore we have $\|\phi_{i}\|_1=R_i\|\rho_{i}\|_1$, and $\|\psi_{i}\|_1=L_i\|\rho_{i}\|_1$. Since $\rho$ is a fixed radial Schwarz function, $\|\rho_{i}\|_1$ is a fixed constant for every $i$, which we will denote by $\Gamma$. We apply Lemma 3 to obtain for any chosen $1\leq i \leq d$  a
set $S_i\subseteq [0,1]^{2d}$ with measure at least ${A
   /4000 B(1+R_i\Gamma)(1+L_i\Gamma)}$ such
that all $(x,y)\in S_i$ satisfy
\begin{equation*}
    A/144 \leq  |Z\widehat{g}(x,y) - Z(\widehat{g} \ast \phi)(x,y) |^2 +  |Zg(x,y) - Z(g \ast \psi)(x,y)|^2.
\end{equation*}
Since we assumed in the theorem $R_i,L_i \geq 1$ for every $1\leq i \leq d$, we have  $A
   /4000 B(1+R_i\Gamma)(1+L_i\Gamma)\geq  A
      /2^410^3 B \Gamma^2 R_i L_i$. Thus if we integrate over $S_i$
\begin{equation*}
\begin{aligned}
           A^2/10^8 B \Gamma^2 R_i L_i &\leq   \||Z\widehat{g} - Z(\widehat{g}\ast \phi)|^2  + |Zg - Z({g} \ast \psi)|^2 \|_{L^1([0,1]^{2d})}   \\
     &\leq   \|Z\widehat{g} - Z(\widehat{g}\ast \phi) \|^2_{L^2([0,1]^{2d})} + \|Zg - Z({g} \ast \psi)\|^2_{L^2([0,1]^{2d})} \\
&\leq   \|Z[\widehat{g} - (\widehat{g}\ast \phi)] \|^2_{L^2([0,1]^{2d})} + \|Z[g -({g} \ast \psi)]\|^2_{L^2([0,1]^{2d})}. 
\end{aligned}
\end{equation*}
 As the Zak transform is a unitary operator from $L^2([0,1]^{2d})$ to
$L^2(\mathbb{R}^d)$, we have
    \begin{equation*}
    =   \|\widehat{g} - (\widehat{g}\ast \phi) \|^2_{L^2(\mathbb{R}^{d})} + \|g -({g} \ast \psi)\|^2_{L^2(\mathbb{R}^{d})}.  
    \end{equation*}
 We apply the Plancherel theorem, and use the assumption that $\widehat{\rho}$ is radially symmetric to obtain      
    \begin{equation*}
    \begin{aligned}
     & \leq     \|g (1- \widehat{\phi}) \|^2_{L^2(\mathbb{R}^{d})} + \|\widehat{g} (1 -\widehat{\psi})\|^2_{L^2(\mathbb{R}^{d})}\\
    &\leq \int_{\mathbb{R}^d \setminus \mathcal{R}}|g(x)|^2dx + \int_{\mathbb{R}^d \setminus  \mathcal{L}}|\widehat{g}(\xi)|^2d\xi 
  \end{aligned}
  \end{equation*}
which proves our theorem with the constant $C$ in the theorem being not more than $A^2/10^8 B \Gamma^2$.

Our theorem can be extended without much effort in two  different directions. The first is to take the rectangles $\mathcal{R},\mathcal{L}$ directed along not the canonical basis but a different orthonormal basis. As long as we take both rectangles directed along the same orthonormal basis, our theorem generalizes easily by employing rotations. The second extension is to
more general Gabor systems that are produced by simple scaling of the canonical lattice: for $a,b$ real numbers we define
\begin{equation*}
	G(g,a,b):= \{ e^{2\pi
		i b  n x} g(x - ma)\}_{(m,n) \in \mathbb{Z}^{2d}}. 
\end{equation*}
It is possible for such a system to be a Riesz basis if $ab=1$.  Our result also holds for these  more general systems, and this can be seen by employing appropriate dilations. 

We now present the  counterexample showing that our estimate is sharp. Nitzan and Olsen observed that a function $f\in L^2(\mathbb{R})$ constructed in \cite{bcgp} satisfies $|Zf|=1$ on all of  $\mathbb{R}^2$,  and for $R,L\geq 1$
\begin{equation*}
    \int_{|x|\geq R} |{f(x)}|^2 d x + \int_{|\xi| \geq L}   |\widehat{f}(\xi)|^2 d \xi \leq \frac{1}{R^2} + \frac{\log L}{L^2}.
\end{equation*}
Since the Zak transform is unitary we have $\|f\|_{L^2(\mathbb{R})}=1$, and hence from the Plancherel theorem we further have  $\|\widehat{f}\|_{L^2(\mathbb{R})}=1$. This function is a counterexample showing that the result of Nitzan and Olsen, which is the $d=1$ case of our result, is sharp. We will construct a counterexample from this function to show that our result cannot be improved in any dimension $d$.

We let $x$ denote $(x_1,x_2,\ldots,x_d)$ and define  on $\mathbb{R}^d$ the function $g\in L^2(\mathbb{R}^d)$ by 
$g(x):= f(x_1)f(x_2)\ldots f(x_d)$.  Owing to this relation  $\|g\|_{L^2(\mathbb{R}^d)}=\|\widehat{g}\|_{L^2(\mathbb{R}^d)}=1$. We then have the same relation between the Fourier transforms of $f$ and $g$:
 $\widehat{g}(\xi)= \widehat{f}(\xi_1)\widehat{f}(\xi_2)\ldots \widehat{f}(\xi_d),$
  and  between the Zak transforms we have  
 $Zg(x,y)=Zf(x_1,y_1)Zf(x_2,y_2)\ldots Z(x_d,y_d).$
  Therefore $|Zg|=1$ everywhere on $\mathbb{R}^{2d}$, and this means that $g$ generates a Gabor frame, and satisfies the hypothesis of our theorem.  On the other hand
   for rectangles $\mathcal{R}=(-R_1,R_1)\times \ldots \times (-R_d,R_d)$ and $\mathcal{L}=(-L_1,L_1)\times \ldots \times (-L_d,L_d)$ observe that by the  definition of $g$ we have
   \begin{equation*}
       \int_{|x_i|\geq R_i } |g(x)|^2 dx=  \int_{|x_i|\geq R_i } |f(x_i)|^2 dx_i, \ \ \ \  \int_{|\xi_i|\geq L_i } |\widehat{g}(\xi)|^2 d\xi=  \int_{|\xi_i|\geq L_i } |\widehat{f}(\xi_i)|^2 d\xi_i
   \end{equation*}
   for any index $i$. Therefore
   \begin{equation*}
       \begin{aligned}
       \int_{\mathbb{R}^{d}\setminus \mathcal{R} } |g(x)|^2 d x + \int_{\mathbb{R}^{d}\setminus \mathcal{L} }   
       |\widehat{g}(\xi)|^2 d \xi 
 &\leq \sum_{i=1}^d  \int_{|x_i|\geq R_i } |g(x)|^2 d x + \int_{|\xi_i|\geq L_i }  
     |\widehat{g}(\xi)|^2 d \xi  \\
    & = \sum_{i=1}^d  \int_{|x_i|\geq R_i } |f(x_i)|^2 d x_i + \int_{|\xi_i|\geq L_i }  
                            |\widehat{f}(\xi_i)|^2 d \xi_i \\
& = \sum_{i=1}^d     \frac{1}{R_i^2} + \frac{\log L_i}{L_i^2}.                       
       \end{aligned}
   \end{equation*}
  If we pick $R_1=R_2=\ldots =R_d=R$ and $L_1=L_2=\ldots =L_d=L$, and $L=R\log^{1/2} R$ we obtain
  \[\leq d\cdot \Big( \frac{1}{R^2} + \frac{\log ( R \log^{1/2} R)}{R^2\log R}\Big)\leq 3d\cdot \frac{1}{R^2}, \] 
whereas if we could improve right hand side of our estimate as mentioned we would, with such choices of $R_i,L_i$,  have
\[\int_{\mathbb{R}^{d}\setminus \mathcal{R} } |g(x)|^2 d x + \int_{\mathbb{R}^{d}\setminus \mathcal{L} }   
       |\widehat{g}(\xi)|^2 d \xi \geq C\cdot \frac{\log (R^2\log^{1/2}R)}{R^2\log^{1/2} R}\geq C \cdot \frac{\log^{1/2} R}{R^2},\]
with a constant $C$ independent of $R$, which  is a  clear contradiction

\end{document}